\documentclass[a4paper,12pt]{article}

\makeatletter\makeatother
\usepackage{amsthm}

\usepackage{amsmath,amssymb}
\usepackage{mathtools}

\usepackage[margin=25mm]{geometry}
\usepackage{abstract}
\setlength{\absleftindent}{0pt}
\setlength{\absrightindent}{0pt}

\usepackage[T1]{fontenc}
\usepackage[utf8]{inputenc}

\usepackage[expansion=false,nopatch=footnote]{microtype}
\usepackage[lining,tabular,scale=.9]{FiraSans}          
\usepackage[tt=false,semibold]{libertinus}              
\usepackage[libertine,smallerops]{newtxmath}            
\usepackage[lining,scaled=.8]{FiraMono}                 
\usepackage[cal=boondox,frak=euler]{mathalpha}          

\DeclareFontEncoding{MDA}{}{}
\DeclareSymbolFont{mathdesignA}{MDA}{mdput}{m}{n}
\SetSymbolFont{mathdesignA}{bold}{MDA}{mdput}{b}{n}
\DeclareSymbolFontAlphabet{\mathbb}{mathdesignA}

\DeclareFontFamily{OMX}{MnSymbolE}{}
\DeclareSymbolFont{MnLargeSymbols}{OMX}{MnSymbolE}{m}{n}
\DeclareFontShape{OMX}{MnSymbolE}{m}{n}{%
  <-6> MnSymbolE5 <6-7> MnSymbolE6 <7-8> MnSymbolE7 <8-9> MnSymbolE8
  <9-10> MnSymbolE9 <10-12> MnSymbolE10 <12-> MnSymbolE12}{}
\DeclareMathDelimiter{[}{\mathopen}{MnLargeSymbols}{'000}{MnLargeSymbols}{'000}
\DeclareMathDelimiter{]}{\mathclose}{MnLargeSymbols}{'005}{MnLargeSymbols}{'005}
\DeclareMathDelimiter{\llbr}{\mathopen}{MnLargeSymbols}{'102}{MnLargeSymbols}{'102}
\DeclareMathDelimiter{\rrbr}{\mathclose}{MnLargeSymbols}{'107}{MnLargeSymbols}{'107}

\usepackage{setspace}
\setstretch{1.25}

\newcommand{\initlengths}{%
  \setlength{\abovedisplayshortskip}{3pt plus 9pt minus 3pt}%
  \setlength{\belowdisplayshortskip}{9pt plus 9pt minus 9pt}%
  \setlength{\abovedisplayskip}{9pt plus 9pt minus 9pt}%
  \setlength{\belowdisplayskip}{9pt plus 9pt minus 9pt}%
  \hfuzz 1pt%
  \tolerance 400%
}

\usepackage{titlesec}
\usepackage{titling}

\pretitle{\vspace{0pt}\setstretch{1.05}\selectfont\LARGE\centering}
\posttitle{\par\vspace{6pt}}

\titleformat{\section}{\Large}{\thesection}{1em}{}
\titleformat{\subsection}{\large\sffamily\bfseries\boldmath}{\thesubsection}{1em}{}

\usepackage{enumitem}
\setlist{noitemsep}
\setlist[enumerate]{label=\textnormal{(\roman*)}}

\usepackage[bottom,hang,multiple]{footmisc}
\setlength{\footnotemargin}{1em}
\setlength{\footnotesep}{10pt}

\usepackage{etoolbox}
\makeatletter
\patchcmd{\@makefntext}{\ifFN@hangfoot\bgroup}%
{\ifFN@hangfoot\bgroup\def\@makefnmark{\rlap{\sffamily\bfseries\@thefnmark}}}{}{}%
\makeatother

\usepackage[labelsep=period,labelfont={bf}]{caption}

\usepackage{xcolor}
\definecolor{paperblue}{RGB}{0,102,204}
\usepackage[
  colorlinks=true,
  linkcolor=paperblue,
  citecolor=paperblue,
  urlcolor=paperblue
]{hyperref}

\usepackage[
  backend=biber,
  style=alphabetic,
  maxnames=10,
  giveninits=true,
  sortcites=true,      
  sorting=nyt          
]{biblatex}
\addbibresource{refs.bib}

\newcommand{\preparebibliography}{
  \phantomsection
  \addcontentsline{toc}{section}{References}
  \sloppy
  \setstretch{1.1}
  \renewcommand*{\bibfont}{\normalfont\small}
}

\usepackage[capitalize,nameinlink,noabbrev]{cleveref}



\newcommand{\C}{\mathbb{C}}


\DeclareMathOperator{\supp}{supp}

\renewcommand{\d}{\mathop{}\!\mathrm{d}}


\newcommand{\cU}{\mathcal{U}}

\newcommand{\lp}{\langle}
\newcommand{\rp}{\rangle}







\usepackage{aliascnt}

\theoremstyle{plain}
\newtheorem{theorem}{Theorem}[section]

\newaliascnt{lemma}{theorem}
\newtheorem{lemma}[lemma]{Lemma}
\aliascntresetthe{lemma}

\newaliascnt{proposition}{theorem}
\newtheorem{proposition}[proposition]{Proposition}
\aliascntresetthe{proposition}

\newaliascnt{corollary}{theorem}

\aliascntresetthe{corollary}

\theoremstyle{definition}

\newaliascnt{definition}{theorem}
\newtheorem{definition}[definition]{Definition}
\aliascntresetthe{definition}

\newaliascnt{example}{theorem}

\aliascntresetthe{example}

\theoremstyle{remark}

\newaliascnt{remark}{theorem}
\newtheorem{remark}[remark]{Remark}
\aliascntresetthe{remark}

\crefname{theorem}{theorem}{theorems}
\Crefname{theorem}{Theorem}{Theorems}

\crefname{lemma}{lemma}{lemmas}
\Crefname{lemma}{Lemma}{Lemmas}

\crefname{proposition}{proposition}{propositions}
\Crefname{proposition}{Proposition}{Propositions}

\crefname{corollary}{corollary}{corollaries}
\Crefname{corollary}{Corollary}{Corollaries}

\crefname{definition}{definition}{definitions}
\Crefname{definition}{Definition}{Definitions}

\crefname{example}{example}{examples}
\Crefname{example}{Example}{Examples}

\crefname{remark}{remark}{remarks}
\Crefname{remark}{Remark}{Remarks}

\newenvironment{statement}[1]
  {\par\medskip\noindent\textbf{#1.}\itshape\ }
  {\par\medskip}

\title{Equidistribution of graphs of holomorphic correspondences}
\author{Muhan Luo\thanks{
Department of Mathematics, National University of Singapore, Singapore 119076. 
Email: \texttt{e708207@u.nus.edu}
}
}

\date{}

\begin{document}
\initlengths
\maketitle

\begin{abstract}
\setstretch{1.1}

Let $X$ be a compact Riemann surface. Let $f$ be a holomorphic self-correspondence of $X$ with degrees $d_1$ and $d_2$. Assume that $d_1\neq d_2$ or $f$ is non-weakly modular. We show that the graphs of the iterates $f^n$ of $f$ are equidistributed exponentially fast with respect to a positive closed current in $X\times X$.

\end{abstract}

{\setstretch{1.1}\tableofcontents}

\section{Introduction and main results}

Let $X$ be a compact Riemann surface. Let $\pi_1$ and $\pi_2$ be the canonical projections from $X\times X$ to its factors. A \textit{holomorphic correspondence} on $X$ is an effective analytic cycle $\Gamma = \sum_i \Gamma_i$ in $X\times X$ of pure dimension one containing no fiber of $\pi_1$ or $\pi_2$ where all $\Gamma_i$ are irreducible but not necessarily distinct. $\Gamma$ determines a multi-valued map $f$ on $X$: for any $x\in X$, define
\[
    f(x):=\pi_2(\pi_1^{-1}(x)\cap\Gamma),
\]
where the points are counted with multiplicity. We call $\Gamma$ the \textit {graph} of $f$. We define the \textit {degrees} of $f$ to be the degrees of $\pi_1|_\Gamma$ and $\pi_2|_\Gamma$ and denote them by $d_1(f)$ and $d_2(f)$ respectively. Then $f(x)$ is a set of $d_1(f)$ points counted with multiplicity. The adjoint of $f$ is defined by exchanging $\pi_1$ and $\pi_2$:
\[
    f^{-1}(x):=\pi_1(\pi_2^{-1}(x)\cap\Gamma).
\]

We can compose two correspondences and in particular consider the $n^{th}$ iterates $f^n$ of a holomorphic correspondence $f$ (see below for precise definition). Denote by $d_1=d_1(f)$ and $d_2=d_2(f)$.  Equidistribution of periodic points with respect to an invariant measure is one of the key questions in studying the dynamics of a holomorphic correspondence. Let $\Gamma_n$ denote the graph of $f^n$ in $X\times X$ which defines a positive closed $(1,1)$-current $[\Gamma_n]$. Periodic points of period $n$ can be identified with the intersection of $\Gamma_n$ with the diagonal of $X\times X$. It is therefore important to know the limit of $d_2^{-n}[\Gamma_n]$ when $n$ goes to infinity as well as the rate of convergence. In this paper, we prove that for two large classes of holomorphic correspondences, the normalized graph currents converge exponentially fast to a limit current which is related to the equilibrium measures. 

The dynamics of $f$ depends on whether $d_1$ equals to $d_2.$ Consider first the case when $d_1\neq d_2$. We may assume $d_1<d_2$ since the opposite case can be treated in the same way. A class of polynomial correspondences satisfying this condition is studied in \cite{D05BSMF}. The general case is studied by Dinh-Sibony \cite{DS06CMH}. See also \cite{fornaess:survey,S99PS} for the case of holomorphic maps. By \cite{DS06CMH}, the correspondence $f$ admits an equilibrium probability measure $ \mu $ such that
$f^*(\mu)=d_2\mu$. Equidistribution of periodic points with respect to $\mu$ can be obtained from \cite{DNT15}. Although the original proof is for meromorphic self-maps with dominant dynamical degree, it also works in this case, see also the survey \cite{DS17}. We prove the following result for $\Gamma_\infty:=\pi_1^*(\mu).$

\begin{theorem}\label{thm:main-thm-neq}
    Let $f$ be a holomorphic correspondence on a compact Riemann surface $X$ with degrees $d_1<d_2$. Let $\mu,\Gamma_n$ and $\Gamma_\infty$ be as above. Then for every $\alpha>0$, there is a constant $0<\lambda_\alpha<1$ such that for any test $(1,1)$-form $\beta$ of class $\mathcal C^\alpha$ on $X\times X$, we have 
    \begin{equation}\label{eq:main-thm-poly}
        \big| \langle d_2^{-n}[\Gamma_n]-\Gamma_\infty, \beta  \rangle \big|\leq C_\alpha\|\beta\|_{\mathcal C^\alpha}\lambda_\alpha^n, \quad \text{ for every }n\geq 1,
    \end{equation}
     where $C_\alpha>0$ is a constant independent of $n$ and $\beta$.
\end{theorem}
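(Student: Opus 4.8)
The plan is to decompose the test form $\beta$ according to its bidegree with respect to the two factors of $X\times X$ and treat each piece separately, showing that exactly one of them survives in the limit. Writing $(x,y)$ for points of $X\times X$, every $\mathcal{C}^\alpha$ $(1,1)$-form splits as $\beta=\beta_{2,0}+\beta_{1,1}+\beta_{0,2}$, where $\beta_{2,0}$ has bidegree $(1,1)$ in $x$ and $(0,0)$ in $y$ (locally $\phi\,i\,dx\wedge d\bar x$), $\beta_{0,2}$ has bidegree $(0,0)$ in $x$ and $(1,1)$ in $y$ (locally $\psi\,i\,dy\wedge d\bar y$), and $\beta_{1,1}$ gathers the two mixed types $(1,0)\otimes(0,1)$ and $(0,1)\otimes(1,0)$. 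Since $\Gamma_\infty=\pi_1^*\mu$ has bidegree $(1,1)$ in $x$ and $(0,0)$ in $y$, it pairs nontrivially only with $\beta_{0,2}$; thus I must show that the $\beta_{2,0}$- and $\beta_{1,1}$-contributions of $d_2^{-n}[\Gamma_n]$ decay exponentially, while its $\beta_{0,2}$-contribution converges exponentially to $\langle\Gamma_\infty,\beta_{0,2}\rangle$. Throughout, all constants must be tracked as uniform in $n$ and linear in $\norm{\beta}_{\mathcal{C}^\alpha}$.

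Fix an area form $\omega$ on $X$ with $\int_X\omega=1$, and recall the mass computations $\langle[\Gamma_n],\pi_1^*\omega\rangle=d_1^n$ and $\langle[\Gamma_n],\pi_2^*\omega\rangle=d_2^n$. For $\beta_{2,0}$, I would parametrize $\Gamma_n$ by $\pi_1$: over a generic $x$ it has at most $d_1^n$ sheets $y\in f^n(x)$, on each of which $\beta_{2,0}$ restricts to $\phi(x,y)\,i\,dx\wedge d\bar x$, giving $d_2^{-n}|\langle[\Gamma_n],\beta_{2,0}\rangle|\lesssim (d_1/d_2)^n\norm{\beta}_{\mathcal{C}^0}$. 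For the mixed part I would use the Cauchy--Schwarz inequality attached to the positive closed current $[\Gamma_n]$: writing a mixed term locally as $\alpha\wedge\bar\gamma$ with $\alpha$ horizontal (proportional to $dx$) and $\gamma$ vertical (proportional to $dy$),
\[
  |\langle[\Gamma_n],\alpha\wedge\bar\gamma\rangle|\le \langle[\Gamma_n],i\,\alpha\wedge\bar\alpha\rangle^{1/2}\,\langle[\Gamma_n],i\,\gamma\wedge\bar\gamma\rangle^{1/2}.
\]
The first factor is dominated by the horizontal mass $d_1^n$ and the second by the vertical mass $d_2^n$, so after summing over a partition of unity $d_2^{-n}|\langle[\Gamma_n],\beta_{1,1}\rangle|\lesssim (d_1/d_2)^{n/2}\norm{\beta}_{\mathcal{C}^0}$. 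Both bounds tend to $0$, in agreement with the fact that $\Gamma_\infty$ annihilates $\beta_{2,0}$ and $\beta_{1,1}$.

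The heart of the matter is $\beta_{0,2}=\psi\,i\,dy\wedge d\bar y$. Parametrizing $\Gamma_n$ instead by $\pi_2$, whose fiber over $y$ is the preimage set $f^{-n}(y)$ of $d_2^n$ points, yields
\[
  d_2^{-n}\langle[\Gamma_n],\beta_{0,2}\rangle=\int_X\Big\langle d_2^{-n}(f^n)^*\delta_y,\ \psi(\cdot,y)\Big\rangle\,i\,dy\wedge d\bar y,
\]
where $(f^n)^*\delta_y=\sum_{x\in f^{-n}(y)}\delta_x$, whereas $\langle\Gamma_\infty,\beta_{0,2}\rangle=\int_X\langle\mu,\psi(\cdot,y)\rangle\,i\,dy\wedge d\bar y$. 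The difference is therefore governed by the equidistribution of preimages $d_2^{-n}(f^n)^*\delta_y\to\mu$ on $X$, whose exponential quantitative form follows from the spectral properties of $\Lambda:=d_2^{-1}f^*$ furnished by \cite{DS06CMH,DNT15}: for $\mathcal{C}^\alpha$ observables $g$ one expects $|\langle d_2^{-n}(f^n)^*\delta_y-\mu,g\rangle|\le C_\alpha\norm{g}_{\mathcal{C}^\alpha}\lambda_\alpha^n$. Applying this to $g=\psi(\cdot,y)$ and integrating in $y$ would give the desired exponential convergence of this piece.

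The main obstacle, as I see it, is to make this last estimate genuinely uniform in the base point $y$ and in the $x$-dependence of the coefficient $\psi$, and then to upgrade the resulting weak convergence to the stated $\mathcal{C}^\alpha$ bound with an $\alpha$-dependent rate $\lambda_\alpha<1$. I would handle the latter by the standard regularization-and-interpolation scheme: smooth $\beta$ at a scale $\theta^n$, so that the smoothing error is $O(\theta^{n\alpha}\norm{\beta}_{\mathcal{C}^\alpha})$ while the smoothed form carries derivatives of size $O(\theta^{-Cn})$ that multiply the base rate obtained for smooth data, and then optimize in $\theta$ to produce a rate $\lambda_\alpha<1$. The technical core is thus the exponential equidistribution on $X$ together with the verification that the underlying constants can be taken uniform over the family $\{\psi(\cdot,y)\}_{y\in X}$ arising here; by comparison, the geometric estimates for $\beta_{2,0}$ and $\beta_{1,1}$ are routine once the masses $d_1^n$ and $d_2^n$ are in hand.
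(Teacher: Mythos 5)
Your decomposition into the three bidegree types and your treatment of the first two pieces coincide with the paper's proof: the $\phi\,\d x\wedge\d\bar x$ case via the $\pi_1$-parametrization and the horizontal mass $d_1^n$, and the mixed case via Cauchy--Schwarz against $[\Gamma_n]$, are exactly the paper's Cases (1) and (2). Your smoothing-and-interpolation scheme for general $\alpha$ also parallels the paper's \Cref{lemma:interpolation}. The problem is the piece you yourself identify as the heart of the matter, $\beta_{0,2}=\psi\,\d y\wedge\d\bar y$: there the proposal rests on a statement that is false. You want a bound $|\langle d_2^{-n}(f^n)^*\delta_y-\mu,\,g\rangle|\le C_\alpha\|g\|_{\mathcal C^\alpha}\lambda_\alpha^n$ \emph{uniform in the base point} $y$, i.e. a pointwise exponential equidistribution of preimages. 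No such uniform statement can hold: there are exceptional points. Already for $f(z)=z^d$ on $\mathbb P^1$ (a correspondence with $d_1=1<d_2=d$, equilibrium measure the normalized Lebesgue measure on the unit circle) one has $f^{-n}(0)=\{0\}$ with multiplicity $d^n$, so $d^{-n}(f^n)^*\delta_0=\delta_0$ for all $n$ and the left-hand side does not even tend to zero at $y=0$. What is actually available (the paper's \Cref{prop:equi-function-neq}, extracted from the proof of \cite[Theorem 5.1]{DS06CMH}) is only an $L^1$ bound in $y$: $\|d_2^{-n}(f^n)_*\varphi-\langle\mu,\varphi\rangle\|_{L^1}\le A_0\delta^n\|\varphi\|_{\mathcal C^2}$ for a \emph{fixed} observable $\varphi$ of the $x$-variable alone.

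This is not a minor uniformity issue one can expect to ``verify''; it forces a different mechanism. Since in your integral $\int_X\langle d_2^{-n}(f^n)^*\delta_y,\psi(\cdot,y)\rangle\,i\,\d y\wedge\d\bar y$ the observable $\psi(\cdot,y)$ moves with $y$, the $L^1$-in-$y$ estimate cannot be applied as it stands: $L^1$ control of $y\mapsto d_2^{-n}(f^n)_*\varphi(y)-\langle\mu,\varphi\rangle$ only pairs against functions of $y$ that are decoupled from the observable. The paper's resolution is precisely to decouple the variables before invoking equidistribution: extend $\phi$ periodically, expand it in a Fourier series, truncate at frequency $N$, so that each term is an exact product $a_I\,\varphi_I(x)\,\theta_I(y)$ with $\|\varphi_I\|_{\mathcal C^2}=O(N^2)$; then \Cref{prop:equi-function-neq} applies term by term, the truncation error is $O(1/N)$ by the $\mathcal C^5$ decay of Fourier coefficients, and optimizing $N=[\delta^{-n/12}]$ yields the exponential rate. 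Your proposal is missing exactly this separation-of-variables step (or any substitute for it), so as written the key case does not go through.
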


In the case $d_1=d_2=d$, less is known about the dynamical behaviours while some results are obtained for the subclass of modular correspondences, see \cite{clozel-ullmo,mok:correspondences}. In order to study more general cases, Dinh-Kaufmann-Wu \cite{DKW20} introduced the notion of non-weakly modular correspondences and they constructed two probability measures $\mu^+$ and $\mu^-$ on $X$ which are invariant in the sense that
\[
    f^*(\mu^+)=d\mu^+ \quad \text{and} \quad f_*(\mu^-)=d\mu^-.
\]
Exponential mixing properties and equidistribution of the images and pre-images are obtained with respect to $\mu^\pm$. 

For the distribution of periodic points when $d_1=d_2$, some results are obtained in \cite{CO01,dinh:modular} for modular correspondences. As far as we know, the exponential rate of convergence for various equidistribution problems is still open in this setting. Recently, Matus de la Parra \cite{Matus24} proved equidistribution of periodic points for a class of weakly modular but non-modular correspondences. However, for non-weakly modular correspondences the problem is still open. It is proven in \cite{DKW20} that in this case $d^{-n}[\Gamma_n]$ converges to $\Gamma_\infty:=\pi_1^*(\mu^+)+\pi_2^*(\mu^-)$ but without knowing the speed of convergence. We prove the following more precise result which is analogous to \Cref{thm:main-thm-neq}. It suggests that about half of the periodic points are repelling and equidistributed with respect to $\mu^+$ and half of them are attractive and equidistributed with respect to $\mu^-$. 

 \begin{theorem}\label{thm:main-thm-nwmod}
    Let $f$ be a non-weakly modular correspondence on a compact Riemann surface $X$ with degrees $d_1=d_2=d$. Let $\mu^+,\mu^-,\Gamma_n $ and $\Gamma_\infty$ be as above. Then for every $\alpha>0$, there is a constant $0<\lambda_\alpha<1$ such that for any test $(1,1)$-form $\beta$ of class $\mathcal C^\alpha$ on $X\times X$, we have 
    \begin{equation}\label{eq:main-thm-nwmod}
        \big| \langle d^{-n}[\Gamma_n]-\Gamma_\infty, \beta  \rangle \big|\leq C_\alpha\|\beta\|_{\mathcal C^\alpha}\lambda_\alpha^n, \quad \text{ for every }n\geq 1,
    \end{equation}
     where $C_\alpha>0$ is a constant independent of $n$ and $\beta$.
\end{theorem}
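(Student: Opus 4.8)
The plan is to decompose a test form according to its bidegree with respect to the product structure of $X\times X$ and to treat the resulting pieces by separate mechanisms, each yielding an exponential rate, and then to interpolate. Fix a smooth probability volume form $\omega$ on $X$ and write a smooth $(1,1)$-form $\beta$ on $X\times X$ as $\beta=\beta_{\mathrm h}+\beta_{\mathrm v}+\beta_{\mathrm m}$, where $\beta_{\mathrm h}=a\,\pi_1^*\omega$ has bidegree $((1,1),(0,0))$, $\beta_{\mathrm v}=e\,\pi_2^*\omega$ has bidegree $((0,0),(1,1))$, and $\beta_{\mathrm m}$ collects the two mixed types $((1,0),(0,1))$ and $((0,1),(1,0))$. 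A direct fibre computation shows that $\Gamma_\infty=\pi_1^*\mu^++\pi_2^*\mu^-$ pairs to zero with $\beta_{\mathrm m}$, and that $\langle\pi_1^*\mu^+,\beta_{\mathrm h}\rangle=\langle\pi_2^*\mu^-,\beta_{\mathrm v}\rangle=0$; hence it suffices to match $d^{-n}\langle[\Gamma_n],\beta_{\mathrm h}\rangle$ with $\langle\pi_2^*\mu^-,\beta_{\mathrm h}\rangle$, to match $d^{-n}\langle[\Gamma_n],\beta_{\mathrm v}\rangle$ with $\langle\pi_1^*\mu^+,\beta_{\mathrm v}\rangle$, and to show $d^{-n}\langle[\Gamma_n],\beta_{\mathrm m}\rangle\to0$, each exponentially fast.

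For the horizontal part, since $\pi_1|_{\Gamma_n}$ is a branched cover of degree $d^n$ one has
\[
  d^{-n}\langle[\Gamma_n],\,a\,\pi_1^*\omega\rangle
  = \int_X\Big(\frac{1}{d^n}\sum_{y\in f^n(x)} a(x,y)\Big)\,\omega(x).
\]
I would expand $a$ into a series of tensor products $a=\sum_k a_k^{(1)}\otimes a_k^{(2)}$ with $\cC^\alpha$-norm control, so that the inner sum becomes $\sum_k a_k^{(1)}(x)\,\big(d^{-n}(f^n)^*a_k^{(2)}\big)(x)$; testing $d^{-n}(f^n)^*a_k^{(2)}$ against the smooth density $a_k^{(1)}\omega$ and invoking the exponential equidistribution of images towards $\mu^-$ from \cite{DKW20} gives convergence to $\int_X\!\int_X a(x,y)\,d\mu^-(y)\,\omega(x)=\langle\pi_2^*\mu^-,\beta_{\mathrm h}\rangle$ with rate $\lambda^n$ and constant controlled by $\|\beta\|_{\cC^\alpha}$. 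The vertical part is handled identically, using $\pi_2$ and the exponential equidistribution of preimages towards $\mu^+$.

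The mixed part is the crux and the only place the hypothesis enters. Restricting $\beta_{\mathrm m}$ to a local branch $x\mapsto(x,y(x))$ of $\Gamma_n$ produces factors of the branch derivative $y'(x)$, so $d^{-n}\langle[\Gamma_n],\beta_{\mathrm m}\rangle$ is governed by the action of $(f^n)^*$ on $(1,0)$-forms. On $H^{1,0}(X)$ this action has spectral radius $\rho$, and the a priori bound $\rho\le\sqrt{d_1d_2}=d$ is strict, $\rho<d$, precisely because $f$ is non-weakly modular; consequently $\|(f^n)^*|_{H^{1,0}}\|=o(d^n)$ and the harmonic contribution of $\beta_{\mathrm m}$ decays like $(\rho/d+\vep)^n$. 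The remaining non-harmonic contribution I would absorb by writing the relevant mixed currents as $\ddc$ of potentials $U_n$ on $X\times X$ and proving, via integration by parts combined with the exponential mixing of \cite{DKW20}, that $U_n\to U_\infty$ in $L^1$ exponentially fast; pairing $\ddc(U_n-U_\infty)$ with $\beta$ then transfers two derivatives onto $\beta$ and leaves a bound $\|U_n-U_\infty\|_{L^1}\,\|\beta\|_{\cC^2}\lesssim\lambda^n\|\beta\|_{\cC^2}$. I expect the main obstacle to be exactly this last point: converting the purely cohomological gap $\rho<d$ into a genuine estimate on currents, i.e. controlling the non-harmonic mixed component uniformly, since this is the ingredient not already supplied by \cite{DKW20}.

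Together the three estimates give \eqref{eq:main-thm-nwmod} for test forms of class $\cC^2$ with a fixed $\lambda<1$. Combining this with the trivial bound $|\langle d^{-n}[\Gamma_n]-\Gamma_\infty,\beta\rangle|\le C\|\beta\|_{\cC^0}$ and interpolating between Hölder norms in the standard way produces, for every $\alpha>0$, a rate $0<\lambda_\alpha<1$ (with $\lambda_\alpha\to1$ as $\alpha\to0$) and a constant $C_\alpha>0$ for which the claimed inequality holds, which completes the proof.
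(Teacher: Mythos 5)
Your decomposition into horizontal, vertical and mixed parts, and your treatment of the first two, is essentially the paper's own argument: after a partition of unity, the paper realizes your ``tensor expansion with norm control'' by a truncated Fourier series, reduces to modes $\pi_1^*\varphi_I\wedge\pi_2^*\theta_I$ with $\varphi_I$ a function on the first factor and $\theta_I$ a $(1,1)$-form on the second, and applies the exponential equidistribution of \cite{DKW20} (\Cref{prop:equi-function-nwmod}) towards $\mu^+$, resp.\ $\mu^-$; the final interpolation step is the paper's \Cref{lemma:interpolation}. Those parts of your proposal are sound in outline.

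The mixed part, however, contains a genuine gap --- the very one you flag yourself. First, the pairing $\lp[\Gamma_n],\beta_{\mathrm m}\rp$ is not a cohomological quantity: a mixed test form $\phi(x,y)\,\d x\wedge\d\bar y$ is not closed, and on $X=\Pb^1$ one has $H^{1,0}(X)=0$ while the mixed pairing is certainly not identically zero, so a spectral gap for $(f^n)^*$ on $H^{1,0}$ controls essentially nothing; the entire weight of the estimate falls on what you call the non-harmonic contribution, and your proposed control of it (writing the relevant currents as $\ddc U_n$ and proving exponential $L^1$-convergence of potentials) is exactly the step you concede you cannot carry out. The missing idea is that no cohomology and no potentials are needed: the ingredient quoted in the paper as \Cref{normless1} (\cite[Proposition 3.1]{DKW20}) states that for a non-weakly modular correspondence the operators $d^{-1}f_*$ and $d^{-1}f^*$ are contractions on the \emph{whole} space $L^2_{(1,0)}$, not merely on harmonic or holomorphic forms; your phrase ``$\rho<d$ precisely because $f$ is non-weakly modular'' conflates this $L^2$ statement, which is what actually characterizes weak modularity, with its much weaker finite-dimensional consequence. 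Granting this, the mixed case follows by the same Fourier truncation used elsewhere: each mode factors as $\pi_1^*\gamma_I\wedge\pi_2^*\omega_I$ with $\gamma_I$ a smooth compactly supported $(1,0)$-form and $\omega_I$ a $(0,1)$-form, one checks $\lp\Gamma_\infty,\beta_{\mathrm m}\rp=0$ as you observed, and Cauchy--Schwarz gives
\[
  \big|\lp d^{-n}[\Gamma_n],\pi_1^*\gamma_I\wedge\pi_2^*\omega_I\rp\big|
  =\Big|\int_X d^{-n}(f^n)_*\gamma_I\wedge\omega_I\Big|
  \le \|\omega_I\|_{L^2}\,\big\|d^{-n}(f^n)_*\gamma_I\big\|_{L^2}
  \le A\lambda^n,
\]
with $A$ independent of $n$ and $I$. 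Summing over $|I|\le N$, adding the $O(1/N)$ truncation error, and choosing $N=[\lambda^{-n/8}]$ closes the estimate; replacing your spectral-gap-plus-potentials argument by this one makes your proposal complete and, at that point, identical in substance to the paper's proof.
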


\begin{remark}\rm
    By the proof of \Cref{lemma:interpolation}, for both \Cref{thm:main-thm-neq} and \Cref{thm:main-thm-nwmod}, we can choose $\lambda_\alpha=\lambda_5$ when $\alpha\geq 5$ and $\lambda_\alpha=\lambda_5^{\alpha/5}$ for $0<\alpha<5.$ For \Cref{thm:main-thm-neq}, it is also clear from our proof that the constant $\lambda_5$ can be chosen to be any constant strictly larger than $\delta:=d_1d_2^{-1}$. Therefore, in this setting the constants $\lambda_\alpha$'s are independent of $f$ of given degrees $d_1$ and $d_2$. 
\end{remark}

Here is the main idea of our proof. First we notice that by interpolation theory, it suffices to prove \Cref{thm:main-thm-neq} or \Cref{thm:main-thm-nwmod} for forms of class $\mathcal C^5$, i.e. for $\alpha=5$ (see \cite{T78Book} and \Cref{lemma:interpolation} below). After choosing a good atlas, we work in an open chart $U\times U'$ on $X^2$ with complex coordinate $(x,y)$, where $U$ and $U'$ are charts on $X$. In this local setting, we reduce the problem to three cases:
\begin{enumerate}
    \item $\beta = \phi(x,y) \d x\wedge \d \Bar{x}$;
    \item $\beta = \phi(x,y) \d y\wedge \d \Bar{y}$;
    \item $\beta = \phi(x,y) \d x\wedge \d \Bar{y} \quad \text{or} \quad \beta = \phi(x,y) \d y\wedge \d \Bar{x}$.
\end{enumerate}

To prove \Cref{thm:main-thm-neq}, Case (1) can be done by direct computation using $d_1<d_2$. An application of Cauchy-Schwartz inequality then implies Case (3). To deal with Case (2), we use Fourier expansion to write $\beta$ as linear combinations of $\pi_1^*\varphi_I\wedge \pi_2^*\theta_I$ with controllable error. Here $\varphi_I$ and $\theta_I$ are smooth $(0,0)$ and $(1,1)$-forms on $X$ respectively. To complete the proof, we apply some equidistribution property of functions (\Cref{prop:equi-function-neq}) to $\varphi_I$. 

The proof for \Cref{thm:main-thm-nwmod} is analogous. We treat Case (1) and Case (2) in the same way as Case (2) of \Cref{thm:main-thm-neq}. The key point is still use of an equidistribution property (\Cref{prop:equi-function-nwmod}) which is parallel to \Cref{prop:equi-function-neq}.  For Case (3), we use Fourier expansion to reduce the test form to $\pi_1^*\gamma_I\wedge \pi_2^*\omega_I$ where $\gamma_I$ and $\omega_I$ are $(1,0)$ and $(0,1)$-forms. For such forms, an application of Cauchy-Schwartz inequality and contraction of the operator $d^{-1}f_*$ over $L^2_{(1,0)}$ (\Cref{normless1}) finishes the proof.

Finally we note that the main results and their proofs still hold when we use cycles with positive real coefficients to define correspondences which is useful in the study of random dynamics, see for example \cite{DKW21,DKW23}.

\subsection*{Acknowledgment} The author would like to thank the anonymous referee for valuable comments and suggestions, which have improved the presentation of this article. 

\section{Preliminary results}

\subsection*{Background on correspondences} We briefly recall some basic notions related to holomorphic correspondences. Let $f$ and $g$ be two correspondences on $X$ with graphs $\Gamma$ and $\Gamma'$ respectively. We consider the product $\Gamma \times \Gamma'$ in $X^4 = \{(x_1,x_2,x_3,x_4) : x_i \in X\}$. Define $\widehat{\Gamma}_{f\circ g}$ as the intersection $(\Gamma\times \Gamma')\cap\{x_2=x_3\}$. Let $\Pi_{1,4}$ be the canonical projection from $X^4$ to $X\times X$ which maps $(x_1,x_2,x_3,x_4)$ to $(x_1,x_4)$. Then the graph of the composition $f\circ g$ is given by the cycle
\[
    [\Gamma_{f\circ g}]:=(\Pi_{1,4})_*[\widehat{\Gamma}_{f\circ g}].
\]
The push-forward here could be understood in the sense of currents which is well-defined since we work on a compact manifold. For basic properties of currents, we refer the readers to \cite{D12Book}. Alternatively, we can define the $f\circ g$ as a multi-valued function whose values are given by
\[
    f\circ g(x)=\{z\in X: \exists\ y\in g(x)\text{ such that }z\in f(y)\}.
\]
The points are counted with multiplicity. We can see from the definition that composition of holomorphic correspondences is associative. Moreover, the degrees satisfy the simple relation $d_i(f \circ g) = d_i(f) \cdot d_i(g)$ for $i=1,2$. This allows us to consider iterates $f^n$ of order $n$ of $f$ and we have $d_i(f^n) = d_i(f)^n$ for every $n\geq 1$. 

A correspondence induces push-forward and pullback operators on currents. When $S$ is a smooth form, a continuous function or a finite measure, we have
\begin{equation}\label{eq:pullback}
    f_*(S):=(\pi_2)_*(\pi_1^*(S)\wedge [\Gamma]) \quad \text{and} \quad f^*(S):=(\pi_1)_*(\pi_2^*(S)\wedge[\Gamma]).
\end{equation}
When $S$ is a smooth form, $f^*(S)$ and $f_*(S)$ are smooth outside some finite sets. When $\varphi$ is a continuous function we have $f_*\varphi(y) = \sum_{x \in f^{-1}(y)} \varphi(x)$ where the points in $f^{-1}(y)$ are counted with multiplicity. This function is continuous. Therefore by duality, if $\delta_y$ is the Dirac measure at $y$, we have $f^*(\delta_y)=\sum_{x\in f^{-1}(y)}\delta_x.$ In general, if $\nu$ is a probability measure on $X$, then $f_*\nu$ and $f^*\nu$ are positive measures on $X$ of mass $d_1(f)$ and $d_2(f)$ respectively.

\subsection*{Action on \texorpdfstring{$L^2_{(1,0)}$}{L2(1,0)} and equidistribution properties} 
 Let $L^2_{(1,0)}$ be the space of (1,0)-forms on $X$ with $L^2$ coefficients. For each $\gamma\in L^2_{(1,0)}$, its $L^2$ norm is given by
\[
    \|\gamma\|_{L^2}=\left(\int_X \sqrt{-1}\gamma\wedge\Bar{\gamma}\right)^{1/2}.
\]
Let $f$ be a holomorphic correspondence on $X$ and its degrees are denoted by $d_1$ and $d_2$. The action of $f^*$ has been defined on smooth (1,0)-forms in (\ref{eq:pullback}). By extending continuously, we can also define the pullback $f^*$ on $L^2_{(1,0)}$. When $d_1=d_2=d$, by \cite[Proposition 2.1]{DKW20}, the norm of $f^*$ is bounded by $d$. But in general, $d^{-1}f^*$ is not necessarily a contraction. Non-weakly modular correspondences are defined for this case. Their pullback actions on $L^2_{(1,0)}$ are also contracting.

\begin{definition}[\cite{DKW20}, Definition 3.1]\rm
    A correspondence $f$ on $X$ with degrees $d_1=d_2=d$ is called \textit{ non-weakly modular of degree} $d$ if there does not exist a positive measure $m$ on its graph $\Gamma$ and probability measures $m_1$ and $m_2$ on $X$ such that $m=(\pi_1|_\Gamma)^*(m_1)$ and $m=(\pi_2|_\Gamma)^*(m_2).$
\end{definition}

The results are summarized in the following:

\begin{proposition}[\cite{DKW20}, Proposition 3.1]\label{normless1}
    Let $f$ be a non-weakly modular holomorphic correspondence of degree $d$ on a compact Riemann surface $X$. Consider the operators $d^{-1}f^*$ and $d^{-1}f_*$ acting on $L^2_{(1,0)}$. Then there is a constant $0<\lambda<1$ such that $\|d^{-1}f^*\|<\lambda$ and $\|d^{-1}f_*\|<\lambda$.
\end{proposition}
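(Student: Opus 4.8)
The plan is to handle the two operators in one stroke by first noting that, on $L^2_{(1,0)}$, the operator $f_*$ is the Hilbert-space adjoint of $f^*$. Indeed, with the inner product $\langle\alpha,\beta\rangle=\int_X\sqrt{-1}\,\alpha\wedge\bar\beta$, the projection formula gives $\langle f^*\gamma,\gamma'\rangle=\int_\Gamma\sqrt{-1}\,\pi_2^*\gamma\wedge\pi_1^*\overline{\gamma'}=\langle\gamma,f_*\gamma'\rangle$ for all $\gamma,\gamma'$. Hence $\|f_*\|=\|f^*\|$, and it suffices to prove $\|d^{-1}f^*\|<1$; one then takes $\lambda$ to be any number strictly between $\|d^{-1}f^*\|$ and $1$.

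Next I would revisit the non-strict bound (\cite[Proposition 2.1]{DKW20}) in order to extract the defect that drives the argument. Writing $p_i=\pi_i|_\Gamma$, which are branched covers of degree $d$, one has $f^*\gamma=(p_1)_*(p_2^*\gamma)$; over a generic $x$ with $p_1^{-1}(x)=\{w_1,\dots,w_d\}$ the densities $a_j(x)$ of $p_2^*\gamma$ transferred to $x$ via the local inverses of $p_1$ satisfy $\sqrt{-1}\,f^*\gamma\wedge\overline{f^*\gamma}=\big|\sum_j a_j\big|^2\,\omega_0$ with $\omega_0:=\sqrt{-1}\,\d x\wedge\d\bar x$. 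The fiberwise Cauchy--Schwarz inequality $\big|\sum_j a_j\big|^2\le d\sum_j|a_j|^2$, integrated using $(p_2)_*p_2^*=d$, yields $\|f^*\gamma\|^2\le d^2\|\gamma\|^2$, and the crucial point is the explicit remainder $d^2\|\gamma\|^2-\|f^*\gamma\|^2=\int_X\sum_{j<k}|a_j-a_k|^2\,\omega_0$.

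The heart of the proof, and its main obstacle, is that $\|d^{-1}f^*\|$ is a supremum that need not be attained on the infinite-dimensional space $L^2_{(1,0)}$, so one cannot simply invoke a norm-maximizing eigenform. I would instead argue by contradiction through weak limits of measures. Assume $\|d^{-1}f^*\|=1$ and choose unit forms $\gamma_n$ with $\|f^*\gamma_n\|\to d$, so the defect above tends to $0$. Associate the positive measures $\mu_n:=\sqrt{-1}\,p_2^*\gamma_n\wedge\overline{p_2^*\gamma_n}=p_2^*\nu_n$ on $\Gamma$, where $\nu_n:=\sqrt{-1}\,\gamma_n\wedge\bar\gamma_n$ is a probability measure on $X$; each $\mu_n$ has mass $d$ and is $p_2$-balanced by construction. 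The vanishing defect gives $\|a_{j,n}-a_{k,n}\|_{L^2}\to0$, whence $\||a_{j,n}|^2-|a_{k,n}|^2\|_{L^1}\to0$, which is precisely the statement that $\mu_n$ becomes asymptotically $p_1$-balanced: $\|\mu_n-p_1^*m_{1,n}\|_{\mathrm{TV}}\to0$ with $m_{1,n}:=d^{-1}(p_1)_*\mu_n$ a probability measure.

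Finally I would pass to the limit and invoke the hypothesis. Because the pushforward of a continuous function along a finite map is continuous, each $p_i^*$ is weak-$*$ continuous on measures. Extracting (along one subsequence) weak limits $\nu_n\to\nu$, $m_{1,n}\to m_1$, both probability measures since $X$ is compact, and $\mu_n\to\mu$ of mass $d>0$, I obtain $\mu=p_2^*\nu$ directly, and $\mu=p_1^*m_1$ because the total-variation gap closes. Thus $\mu=(\pi_1|_\Gamma)^*m_1=(\pi_2|_\Gamma)^*\nu$ is a nonzero positive measure on $\Gamma$ realized as a pullback from both factors, contradicting non-weak-modularity. This forces $\|d^{-1}f^*\|<1$, and together with the adjoint identity $\|f_*\|=\|f^*\|$ the proposition follows. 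The delicate step is precisely the upgrade from the asymptotic, Cauchy--Schwarz-type balancing to an exact double-pullback structure in the limit; it is there that the weak-$*$ continuity of $p_i^*$ and the measure-theoretic definition of non-weak-modularity must be combined.
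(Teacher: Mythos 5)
The paper does not prove this proposition: it is imported verbatim from the cited reference [DKW20, Proposition 3.1], so there is no in-paper argument to compare against. Your proof is correct, and it is essentially a reconstruction of the argument in that reference: reduce to $f^*$ via the adjoint identity $\langle f^*\gamma,\gamma'\rangle=\langle\gamma,f_*\gamma'\rangle$, exhibit the Cauchy--Schwarz defect $d^2\|\gamma\|^2-\|f^*\gamma\|^2=\int_X\sum_{j<k}|a_j-a_k|^2\,\omega_0$, take a norm-maximizing sequence, and pass to weak-$*$ limits of the measures $\sqrt{-1}\,p_2^*\gamma_n\wedge\overline{p_2^*\gamma_n}$ on $\Gamma$ to produce a doubly-pulled-back positive measure contradicting non-weak modularity. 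The only points you leave implicit—that the branch labels $a_j$ exist only locally (harmless, since every quantity you integrate is symmetric in the branches) and that $\Gamma$ may be singular or reducible with multiplicities (handled by working on the normalization away from finitely many points)—are routine and do not affect the argument.
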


The above proposition allows the authors in \cite{DKW20} to construct the canonical invariant measures which are mentioned in the introduction. In particular, they obtain an equidistribution property for certain class of functions with respect to these measures. 

\begin{proposition}[\cite{DKW20}, Proposition 3.2]\label{prop:equi-function-nwmod}
    Let $f$ be a non-weakly modular correspondence of degree $d$ on a compact Riemann surface $X$. Let $\mu^+ $ and $ \mu^-$ be as in \Cref{thm:main-thm-nwmod} and $\lambda$ be as in \Cref{normless1}. Then for every $\mathcal C^1$ function $\psi$ and every $n\geq 1$, we have
    \[
       \|d^{-n}(f^n)_*\psi-\lp \mu^+,\psi\rp\|_{L^1}\leq A\lambda^n\|\psi\|_{\mathcal C^1}
    \]
    where $A>0$ is independent of $n$ and $\psi.$ The same holds for $f^*$ and $\mu^-.$
\end{proposition}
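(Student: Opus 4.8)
The plan is to prove the statement for $f_*$ and $\mu^+$; the assertion for $f^*$ and $\mu^-$ follows by exchanging the roles of $\pi_1$ and $\pi_2$, i.e. by replacing $f$ with its adjoint (which is again non-weakly modular and for which \Cref{normless1} supplies $\|d^{-1}f^*\|<\lambda$). Write $T:=d^{-1}f_*$ for the induced operator on functions, so that $d^{-n}(f^n)_*\psi=T^n\psi=:u_n$. Two elementary facts anchor the argument. First, $T$ fixes constants: since $f_*\mathbf 1(y)=\#f^{-1}(y)=d$, we have $T\mathbf 1=\mathbf 1$. Second, $T$ preserves the $\mu^+$-mean: from $f^*\mu^+=d\mu^+$ and the duality $\langle f^*\mu^+,\psi\rangle=\langle\mu^+,f_*\psi\rangle$ one gets $\langle\mu^+,T\psi\rangle=\langle\mu^+,\psi\rangle$, hence $\langle\mu^+,u_n\rangle=\langle\mu^+,\psi\rangle=:c$ for all $n$. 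The target is therefore $\|u_n-c\|_{L^1}\leq A\lambda^n\|\psi\|_{\mathcal C^1}$.

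The engine is a commutation between $f_*$ and $\partial$. Locally over a generic target point the graph $\Gamma$ is a union of graphs $x=\sigma_i(y)$ of holomorphic branches, so $f_*\psi=\sum_i\psi\circ\sigma_i$ and $f_*\gamma=\sum_i\sigma_i^*\gamma$ on $(1,0)$-forms. As $\partial$ commutes with holomorphic pullback, $\partial(f_*\psi)=\sum_i\sigma_i^*(\partial\psi)=f_*(\partial\psi)$, and iterating, $\partial u_n=d^{-n}(f^n)_*(\partial\psi)$. Now \Cref{normless1} gives $\|d^{-1}f_*\|<\lambda$ on $L^2_{(1,0)}$, whence $\|\partial u_n\|_{L^2}<\lambda^n\|\partial\psi\|_{L^2}\leq C\lambda^n\|\psi\|_{\mathcal C^1}$; conjugating (using $\overline{f_*\psi}=f_*\bar\psi$) yields the same bound for $\bar\partial u_n$, so $\|d u_n\|_{L^2}\leq C\lambda^n\|\psi\|_{\mathcal C^1}$. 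The Poincaré inequality on the compact surface $X$ then gives $\|u_n-\bar u_n\|_{L^2}\leq C\lambda^n\|\psi\|_{\mathcal C^1}$, where $\bar u_n:=(\int_X\omega)^{-1}\int_X u_n\,\omega$ is the average against a fixed smooth probability form $\omega$.

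It remains to identify the averages $\bar u_n$ with $c$ at an exponential rate; this is the step I expect to be the main obstacle, because the commutation argument only controls $u_n$ modulo constants, and $\mu^+$ is typically singular, so one cannot simply feed the $L^2$-bound into $\langle\mu^+,\cdot\rangle$. I would argue by telescoping. Writing $\bar u_n=\langle\omega,u_n\rangle=\langle d^{-n}(f^n)^*\omega,\psi\rangle$ and using $\langle\omega,Tu_n\rangle=\langle d^{-1}f^*\omega,u_n\rangle$, one finds $\bar u_{n+1}-\bar u_n=\langle d^{-1}f^*\omega-\omega,\,u_n-\bar u_n\rangle$, the additive constant dropping out because both $\omega$ and $d^{-1}f^*\omega$ are probability measures. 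The density of $f^*\omega$ is locally a finite sum of pullbacks of $\omega$ by holomorphic branches, hence lies in $L^q(X)$ for some $q>1$; pairing by Hölder with $u_n-\bar u_n$ and using the Sobolev embedding $W^{1,2}\hookrightarrow L^{q'}$ valid in real dimension two, one gets $|\bar u_{n+1}-\bar u_n|\leq C\|d u_n\|_{L^2}\leq C'\lambda^n\|\psi\|_{\mathcal C^1}$. Summing the geometric series shows $\bar u_n$ converges to a limit $\ell$ with $|\bar u_n-\ell|\leq C''\lambda^n\|\psi\|_{\mathcal C^1}$; and since $d^{-n}(f^n)^*\omega\to\mu^+$ weakly (the defining construction of $\mu^+$ in \cite{DKW20}) and $\psi$ is continuous, $\bar u_n\to\langle\mu^+,\psi\rangle=c$, so $\ell=c$.

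Finally I would combine the pieces: on the finite-measure space $X$,
\[
\|u_n-c\|_{L^1}\leq (\textstyle\int_X\omega)^{1/2}\|u_n-c\|_{L^2}\leq (\textstyle\int_X\omega)^{1/2}\big(\|u_n-\bar u_n\|_{L^2}+|\bar u_n-c|\,(\textstyle\int_X\omega)^{1/2}\big)\leq A\lambda^n\|\psi\|_{\mathcal C^1},
\]
which is the claimed estimate, with $A$ independent of $n$ and $\psi$ since every bound above is. The main delicacy, as noted, is the borderline two-dimensional Sobolev step used to pin the additive constant; everything else is the clean interplay of the commutation $\partial f_*=f_*\partial$ with the contraction from \Cref{normless1}.
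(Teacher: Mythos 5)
The paper never proves this proposition: it is imported from \cite{DKW20} (Proposition 3.2 there) in a slightly weakened form, with the $L^1$-norm replacing the original $W^{1,2}$-norm, so the only meaningful comparison is with the proof in that reference. Your argument is correct and is essentially that proof. The three pillars --- the commutation $\partial f_*=f_*\partial$ (cleanest justification: push-forward of currents commutes with $d$ and preserves bidegree, the local branches being holomorphic), the contraction of $d^{-1}f_*$ on $L^2_{(1,0)}$ from \Cref{normless1}, and the Poincar\'e inequality --- control $u_n=d^{-n}(f^n)_*\psi$ up to an additive constant, and your telescoping step pins that constant: the pairing $\lp d^{-1}f^*\omega-\omega, u_n-\bar u_n\rp$ is handled by H\"older against the $L^q$ density of $f^*\omega$ (true for some $q>1$: near a ramification value where $\pi_1|_\Gamma$ has local degree $k$, the branches contribute densities of order $|x|^{2/k-2}$, which lie in $L^q$ for $q<k/(k-1)$) together with the two-dimensional Sobolev embedding $W^{1,2}\hookrightarrow L^{q'}$ with $q'<\infty$. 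Two points deserve flagging, neither a gap. First, your identification $\ell=\lp\mu^+,\psi\rp$ invokes the weak convergence $d^{-n}(f^n)^*\omega\to\mu^+$; this is legitimate here because $\mu^+$ is, by the statement of the proposition, precisely the measure constructed in \cite{DKW20}, for which this convergence is a defining property --- and you correctly identified and avoided the trap of pairing the (possibly singular) measure $\mu^+$ directly with the $L^2$-controlled quantity $u_n-\bar u_n$. Second, the reduction of the $f^*$/$\mu^-$ statement to the $f_*$/$\mu^+$ one by passing to the adjoint correspondence is sound, since non-weak modularity is symmetric in $\pi_1$ and $\pi_2$ and \Cref{normless1} bounds both $d^{-1}f_*$ and $d^{-1}f^*$.
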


The following analogous result is obtained in the proof of \cite[Theorem 5.1]{DS06CMH} when $f$ has distinct degrees.

\begin{proposition}\label{prop:equi-function-neq}
    Let $f$ be a holomorphic correspondence on a compact Riemann surface $X$ with degrees $d_1<d_2$. Let $\mu$ be as in \Cref{thm:main-thm-neq}. Then for every $\mathcal C^2$ function $\varphi$ and every $n\geq 1$, we have
    \[
       \|d_2^{-n}(f^n)_*\varphi-\lp \mu,\varphi\rp\|_{L^1}\leq A_0\delta^n\|\varphi\|_{\mathcal C^2}
    \]
    where $\delta=d_1d_2^{-1}<1$ and $A_0>0$ is a constant independent of $n$ and $\varphi.$
\end{proposition}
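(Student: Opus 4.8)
The plan is to study the transfer operator $\Lambda:=d_2^{-1}f_*$ acting on functions. Since $f_*$ sends the constant function $1$ to the constant $d_2$, we have $\Lambda 1 = 1$, and by duality together with $f^*\mu=d_2\mu$ we get $\langle\mu,\Lambda\varphi\rangle=d_2^{-1}\langle f^*\mu,\varphi\rangle=\langle\mu,\varphi\rangle$. Thus the assertion is that $\Lambda^n\varphi\to\langle\mu,\varphi\rangle$ in $L^1$ at geometric rate $\delta^n$. Fix a Kähler form $\omega$ with $\int_X\omega=1$, take $\|\cdot\|_{L^1}$ with respect to $\omega$, and set $m_n:=\int_X(\Lambda^n\varphi)\,\omega$. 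I would split the error into an oscillation term $\|\Lambda^n\varphi-m_n\|_{L^1}$ and a mean term $|m_n-\langle\mu,\varphi\rangle|$, both of which I will bound by a single estimate on the mass of $\ddc(\Lambda^n\varphi)$.

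The heart of the argument is that mass estimate, which is exactly where $d_1<d_2$ enters. Because $[\Gamma]$ is closed, $\ddc$ commutes with the pushforward of the correspondence, so $\ddc(\Lambda^n\varphi)=d_2^{-n}(f^n)_*(\ddc\varphi)$. The decisive asymmetry is that, while $f_*$ scales constants by $d_2$, on a \emph{positive} $(1,1)$-form $S$ it scales total mass by $d_1$, since $\int_X f_*S=\int_\Gamma(\pi_1|_\Gamma)^*S=d_1\int_X S$; iterating gives $\|(f^n)_*S\|=d_1^n\|S\|$. For $\varphi\in\mathcal C^2$ one has $-C\|\varphi\|_{\mathcal C^2}\,\omega\le\ddc\varphi\le C\|\varphi\|_{\mathcal C^2}\,\omega$, so applying the positive operator $(f^n)_*$ and using $\|(f^n)_*\omega\|=d_1^n$ yields
\[
\|\ddc(\Lambda^n\varphi)\|=d_2^{-n}\|(f^n)_*(\ddc\varphi)\|\le d_2^{-n}d_1^n\,C\|\varphi\|_{\mathcal C^2}=\delta^n C\|\varphi\|_{\mathcal C^2}.
\]

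With this bound the two pieces follow from linear potential theory on $X$. Let $G$ be the Green operator of $(X,\omega)$, so that $h-\int_X h\,\omega=G(\ddc h)$ for every $h$ and $G$ is bounded from measures to $L^1$ (the Green kernel has an integrable logarithmic singularity). Then $\|\Lambda^n\varphi-m_n\|_{L^1}=\|G(\ddc\Lambda^n\varphi)\|_{L^1}\le C_P\|\ddc(\Lambda^n\varphi)\|\le C_P C\,\delta^n\|\varphi\|_{\mathcal C^2}$. For the mean, the $\mu$-invariance $\langle\mu,\Lambda^n\varphi\rangle=\langle\mu,\varphi\rangle$ gives $m_n-\langle\mu,\varphi\rangle=\langle\Lambda^n\varphi,\omega-\mu\rangle$; since $\omega-\mu$ has mass $0$ it equals $\ddc w$ for a quasi-potential $w$, and integration by parts gives $m_n-\langle\mu,\varphi\rangle=\langle\ddc(\Lambda^n\varphi),w\rangle$, whence $|m_n-\langle\mu,\varphi\rangle|\le\|w\|_{L^\infty}\|\ddc(\Lambda^n\varphi)\|\le\|w\|_{L^\infty}C\,\delta^n\|\varphi\|_{\mathcal C^2}$. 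Adding the two estimates gives the Proposition with $A_0:=C(C_P+\|w\|_{L^\infty})$.

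The main obstacle is twofold. First, the objects $(f^n)_*(\ddc\varphi)$ are only singular $(1,1)$-forms (with singularities over the critical values of the iterated correspondence), so the commutation $\ddc f_*=f_*\ddc$ and both integration-by-parts steps must be justified at the level of currents rather than smooth forms; this is routine in the pluripotential framework but must be done with care. Second, and more substantively, the mean estimate needs the quasi-potential $w$ of $\omega-\mu$ to be bounded, i.e.\ the equilibrium measure $\mu$ must have bounded potential. This is the genuine input from \cite{DS06CMH}: $\mu$ arises as the geometric limit $d_2^{-n}(f^n)^*\omega\to\mu$, and the associated potential series converges to a bounded (indeed Hölder) function, which is precisely what makes $\|w\|_{L^\infty}$ finite.
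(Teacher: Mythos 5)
Your proof is correct, but note that the paper contains no internal proof of this proposition to compare against: the statement is quoted from the proof of Theorem 5.1 in \cite{DS06CMH}, with the paper remarking only that its $L^1$ formulation is weaker than the DSH-norm version proved there. Your argument is in substance a self-contained, dimension-one reconstruction of that same $\ddc$-method. The heart of both is the contraction you isolate: since $f_*$ multiplies constants by $d_2$ but multiplies the mass of positive $(1,1)$-forms only by $d_1$, one gets $\|\ddc(d_2^{-n}(f^n)_*\varphi)\|\le C\delta^n\|\varphi\|_{\mathcal C^2}$, which is precisely the Dinh--Sibony DSH-norm contraction; your splitting into an oscillation term (controlled by the Green operator, which is bounded from measures to $L^1$ because the Green kernel has an integrable logarithmic singularity) and a mean term (controlled by pairing with a quasi-potential of $\omega-\mu$) is the standard way to convert that contraction into the stated estimate. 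The one external input you invoke, boundedness of the quasi-potential of $\mu$, is legitimate and non-circular: it is established in \cite{DS06CMH} as part of the construction of $\mu$, prior to any equidistribution statement; alternatively it follows from a cheap geometric-series argument, writing $d_2^{-1}f^*\omega=\omega+\ddc u$ with $u$ bounded (since $f^*\omega$ has local densities in $L^p$ for some $p>1$) and noting $\|d_2^{-j}(f^j)^*u\|_\infty\le\delta^j\|u\|_\infty$ because $(f^j)^*$ on functions sums over $d_1^j$ points, so that $\mu=\omega+\ddc\sum_{j\ge0}d_2^{-j}(f^j)^*u$ has bounded potential. The two justifications you flag as needing care, namely the commutation $\ddc f_*=f_*\ddc$ (which follows from closedness of $[\Gamma_n]$) and integration by parts for bounded potentials against finite-mass $\ddc$'s, are indeed routine in this one-dimensional setting.
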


It should be noted here that the inequalities in \Cref{prop:equi-function-nwmod} and \Cref{prop:equi-function-neq} are slightly weaker than the original versions in the citations where $W^{1,2}$-norm and DSH-norms are involved, respectively.

\subsection*{Fourier expansion of periodic functions on \texorpdfstring{$\mathbb R^4$}{R4}} 

In order to apply the previous results to prove our main theorem, we use Fourier expansion to separate the variables. We review some Fourier analysis on $\mathbb R^4$ that will be used for $X^2.$ All the proofs of the results can be found in classical textbooks, for example \cite{SS}. Let $(x,y)=(x_1,x_2,y_1,y_2)\in \mathbb R^4$ and $\phi=\phi(x,y)$ be a function of class $\mathcal C^k$ on $\mathbb R^4$ with $k\geq 1$ which is periodic of period 1 in each variable. For any $I=(i_1,i_2,i_3,i_4)\in\mathbb Z^4$, define $I\cdot (x,y)=i_1x_1+i_2x_2+i_3y_1+i_4y_2$. Then by classical Fourier analysis, we have 
\begin{equation}\label{eq:fouri-expans}
    \phi(x,y)=\sum_{I\in \mathbb Z^4} a_Ie^{2\pi  \sqrt{-1}I\cdot (x,y)}.
\end{equation}
Here the convergence should be understood as pointwise convergence which is also uniform in our setting. The constants $a_I$ are given by
\begin{equation*}
    a_I=\int_{[0,1]^4}{\phi}(x,y)e^{-2\pi  \sqrt{-1} I\cdot (x,y)}\d x\d y.
\end{equation*}
For simplicity, we may assume $\|\phi\|_{\mathcal C^k}\leq 1.$ Then a priori $|a_I|\leq 1$ for all $I$. Define $|I|:=\max_{1\leq s\leq 4}\{|i_s|\}$. The speed of decay of the coefficients $a_I$ as $|I|$ tends to infinity is connected with the regularity of $\phi$. When $k=1$ and suppose $|I|=|i_1|>0$, using integration by parts we have 
\[
    |a_I|=\left|\int_{[0,1]^4}\frac{\partial \phi}{\partial x_1}\frac{e^{2\pi 
  \sqrt{-1} I\cdot (x,y)}}{2\pi \sqrt{-1}i_1}\d x\d y \right|\leq \frac{1}{2\pi  |i_1|}<\frac{1}{|I|}.
\]
By induction, we can prove that when $\phi$ is $\mathcal C^k$ for some $k\in\mathbb N$, for any $I\neq 0$ we have
\begin{equation}\label{eq:decay-fouri-coeffi}
    |a_I|\leq \frac{1}{|I|^k}.
\end{equation}

\section{Proof of the main theorems}

Let $f$ be a holomorphic correspondence on a compact Riemann surface $X$ and its degrees are denoted by $d_1$ and $d_2$ as in the introduction. We are in one of the two cases:
\begin{enumerate}[label=(\Roman*)]
    \item $d_1<d_2$ and $\Gamma_\infty=\pi^*_1(\mu)$;
    \item $d_1=d_2=d$, $f$ is non-weakly modular and $\Gamma_\infty=\pi_1^*(\mu^+)+\pi_2^*(\mu^-)$. 
\end{enumerate}
As we have seen, these two cases share some similarities in their dynamical behaviours. Therefore we will prove the main theorems in a unified way.

\subsection*{Preliminary settings}  Fix a K\"ahler form $\omega$ of $X$ with $\int_X\omega=1$. Then $\Omega=\frac{1}{\sqrt{2}}(\pi_1^*\omega+\pi_2^*\omega)$ is a K\"ahler form of $X\times X$ with $\int_{X\times X}\Omega^2=1$. Let $\Gamma_n$ be the graph of $f^n$ on $X\times X$. Note that for all $n\geq 1$ and $i=1$ or $2$, $\pi_i|_{\Gamma_n}$ is a ramified covering of $X$ and the ramification points are finite. In particular, they are of Lebesgue measure zero. By restricting to some connected and simply connected open subsets outside the ramification values, we can check that 
$(\pi_i|_{\Gamma_n})_*\beta$ is an $L^1$-form on X and we have the following property: for any smooth (1,1)-form $\beta$ on $X\times X$,
\begin{equation}\label{eq:graph-covering}
    \lp [\Gamma_n], \beta\rp=\lp [X],(\pi_i|_{\Gamma_n})_*\beta\rp= \int_X(\pi_i|_{\Gamma_n})_*\beta, \quad i=1,2.
\end{equation}
Recall that the mass of a positive closed $(1,1)$-current $T$ on $X\times X$ with respect to $\Omega$ is given by $\|T\|:=\lp T,\Omega\rp.$ Therefore,
\[
    \|[\Gamma_n]\|=\frac{1}{\sqrt{2}}\int_X(f^n)_*\omega+(f^n)^*\omega=\frac{d_1^n+d_2^n}{\sqrt{2}}.
\]
Since in both cases $d_1\leq d_2$, we have $\|d_2^{-n}[\Gamma_n]\|<2$ for all $n\geq 1$. On the other hand, for any probability measure $\nu$ on $X$, it is easy to compute that
\[
    \lp \pi_1^*(\nu),\Omega\rp=\lp \pi_2^*(\nu),\Omega\rp=\frac{1}{\sqrt{2}}.
\]
Therefore in either case $\|\Gamma_\infty\|<2$. Define
\begin{equation}\label{eq:T_n}
    T_n:=d_2^{-n}[\Gamma_n]-\Gamma_\infty.
\end{equation}
In both cases we have $\|T_n\|\leq 4$.

\begin{lemma}\label{lemma:interpolation}
    Assume that \Cref{thm:main-thm-neq} and \Cref{thm:main-thm-nwmod} hold for $\alpha=5$. Then they hold for all $\alpha>0.$
\end{lemma}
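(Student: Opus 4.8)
The plan is to interpolate between the crude zeroth-order bound coming from the uniform mass estimate $\|T_n\|\le 4$ and the fifth-order bound assumed in the hypothesis. The case $\alpha\ge 5$ requires no interpolation at all: since $\|\beta\|_{\mathcal C^5}\le \|\beta\|_{\mathcal C^\alpha}$, the assumed estimate for $\alpha=5$ immediately yields \eqref{eq:main-thm-poly} and \eqref{eq:main-thm-nwmod} with $\lambda_\alpha=\lambda_5$ and $C_\alpha=C_5$. Hence the substance of the lemma is the range $0<\alpha<5$.

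For the two endpoints, first I would record the order-zero estimate. Because $T_n$ is the difference of the two positive currents $d_2^{-n}[\Gamma_n]$ and $\Gamma_\infty$, whose masses are each bounded by $2$ independently of $n$, comparing $\beta$ pointwise to $\Omega$ gives
\[
  |\langle T_n,\beta\rangle|\le C_0\,\|\beta\|_{\mathcal C^0},
\]
with a constant $C_0$ depending only on $\Omega$ and, crucially, \emph{independent of $n$}. Second, the hypothesis for $\alpha=5$ furnishes the decaying estimate
\[
  |\langle T_n,\beta\rangle|\le C_5\,\lambda_5^{\,n}\,\|\beta\|_{\mathcal C^5}.
\]

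To interpolate between these I would smooth the test form at a scale $\epsilon\in(0,1]$. Choosing a finite atlas of $X\times X$ and a subordinate partition of unity, let $\beta_\epsilon$ be the form obtained by convolving each localized piece of $\beta$ with a bump function of scale $\epsilon$ in coordinates and reassembling. Standard mollification inequalities then give
\[
  \|\beta-\beta_\epsilon\|_{\mathcal C^0}\le C\,\epsilon^{\alpha}\,\|\beta\|_{\mathcal C^\alpha}
  \qquad\text{and}\qquad
  \|\beta_\epsilon\|_{\mathcal C^5}\le C\,\epsilon^{\alpha-5}\,\|\beta\|_{\mathcal C^\alpha}.
\]
Writing $\langle T_n,\beta\rangle=\langle T_n,\beta-\beta_\epsilon\rangle+\langle T_n,\beta_\epsilon\rangle$ and applying the order-zero bound to the first term and the fifth-order bound to the second, I obtain
\[
  |\langle T_n,\beta\rangle|\le C\big(\epsilon^{\alpha}+C_5\,\lambda_5^{\,n}\,\epsilon^{\alpha-5}\big)\|\beta\|_{\mathcal C^\alpha}.
\]
Optimizing by setting $\epsilon=\lambda_5^{\,n/5}$ balances the two contributions and produces
\[
  |\langle T_n,\beta\rangle|\le C_\alpha\,\big(\lambda_5^{\,\alpha/5}\big)^{n}\,\|\beta\|_{\mathcal C^\alpha},
\]
so that $\lambda_\alpha:=\lambda_5^{\,\alpha/5}\in(0,1)$ and $C_\alpha$ is independent of $n$ and $\beta$, matching the values recorded in the remark. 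Equivalently, one may avoid mollification and invoke the abstract interpolation theorem for H\"older--Zygmund spaces directly, viewing $\beta\mapsto\langle T_n,\beta\rangle$ as a linear functional that is bounded on $\mathcal C^0$ and on $\mathcal C^5$ and using the identification $\mathcal C^\alpha=[\mathcal C^0,\mathcal C^5]_{\alpha/5}$; see \cite{T78Book}.

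The hard part will be the bookkeeping rather than the idea: one must verify that patching the local convolutions through the partition of unity preserves the two mollification estimates with constants uniform in $n$, and (for the abstract route) that the interpolation space is genuinely $\mathcal C^\alpha$, which is standard for non-integer $\alpha$ but needs the Zygmund formulation at integer values. In either case the decisive structural input is the $n$-independence of the endpoint constant $C_0$: it is precisely this uniformity that lets the exponential factor $\lambda_5^{\,n}$ survive the interpolation, now weakened only to its power $\lambda_5^{\,n\alpha/5}$.
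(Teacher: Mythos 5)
Your proposal is correct, and the underlying idea---interpolating between the $n$-independent mass bound at order zero and the decaying bound at order five---is exactly the paper's. The difference is in execution: the paper's proof is the abstract route you mention only as a fallback. It defines the dual norms $\|T_n\|_{\mathcal C^{-l}}$, records the two endpoint bounds $\|T_n\|_{\mathcal C^0}\le 4$ and $\|T_n\|_{\mathcal C^{-5}}\le C_5\lambda_5^n$, and invokes interpolation theory \cite{T78Book} to conclude $\|T_n\|_{\mathcal C^{-\alpha}}\le C_\alpha\lambda_5^{n\alpha/5}$, all in a few lines. Your primary route---mollifying $\beta$ at scale $\epsilon$ and optimizing $\epsilon=\lambda_5^{n/5}$---is a self-contained, constructive proof of the same inequality; it buys independence from the interpolation machinery, but at the cost of one technical point that your ``bookkeeping'' caveat undersells: for $2<\alpha<5$ the estimate $\|\beta-\beta_\epsilon\|_{\mathcal C^0}\le C\epsilon^{\alpha}\|\beta\|_{\mathcal C^\alpha}$ fails for a standard positive symmetric mollifier, which saturates at $O(\epsilon^2)$; one needs a higher-order kernel with vanishing moments up to order $\lfloor\alpha\rfloor$ (or an equivalent device, e.g.\ Littlewood--Paley truncation). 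This is standard but must be stated, since the whole content of the lemma is the range of $\alpha$ below $5$. With that fix---or by simply taking your abstract alternative, which is literally the paper's proof---the argument is complete, and your conclusion $\lambda_\alpha=\lambda_5^{\alpha/5}$, with $C_\alpha$ independent of $n$ and $\beta$, agrees with what the paper obtains.
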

\begin{proof}
    When $\alpha>5$, the result directly follows from the assumption with $\lambda_\alpha=\lambda_5$. Suppose $0<\alpha<5$. For each $l\geq 0$, let $\mathcal E^l$ be the space of $(1,1)$-forms of class $\mathcal C^l$ on $X$ with the usual $\mathcal C^l$-norm. Fix $n\geq 1$. For each $l\geq 0$, let $\|T_n\|_{\mathcal C^{-l}}$ be the norm of $T_n$ as a continuous linear functional on $\mathcal E^l$, i.e.,
    \[
        \|T_n\|_{\mathcal C^{-l}}:=\sup_{\substack{\beta\in\mathcal E^l,\\ \|\beta\|_{\mathcal C^l}\leq 1}}\lp T_n,\beta\rp.
    \]
    In either case, our assumption implies
\[
    \|T_n\|_{\mathcal C^{-5}}\leq C_5\lambda_5^n
\]
 for some constants $0<\lambda_5<1$ and $C_5>0.$ On the other hand, we also have $\|T_n\|_{\mathcal C^0}\leq 4.$  By interpolation theory (see \cite{T78Book}), we obtain for each $0<\alpha<5$,
\[
    \|T_n\|_{\mathcal C^{-\alpha}}\leq C_{\alpha}\lambda_{\alpha}^n
\]
for some $C_\alpha>0$ and $\lambda_\alpha=\lambda_5^{\alpha/5}$. This directly implies \Cref{thm:main-thm-neq} and \Cref{thm:main-thm-nwmod} for all $\alpha>0.$
\end{proof}

From now on we take $\alpha=5$. Denote by $\mathbb U=(0,1)\times (0,1)$ and $\mathbb U_0=(\frac1{4},\frac3{4})\times (\frac1{4},\frac3{4})$ two open squares in $\mathbb R^2\simeq \C$. We fix a finite atlas $\cU$ of $X$ which satisfies the following: for any coordinate chart $U\in\cU$ with a diffeomorphism $\tau_U:U\to \tau_U(U)\subset \C$, the image $\tau_U(U)$ contains $\mathbb U$ and $\{\tau_U^{-1}(\mathbb U_0)\}_{U\in\cU}$ is an open cover of $X$. Then $\cU$ induces an atlas of $X\times X$ where the coordinate charts are given by $U\times U'$ along with a diffeomorphism $\tau_{U, U'}:=\tau_U\times\tau_{U'}$ from $U\times U'$ to an open subset in $\C^2$. They satisfy the following conditions:
\begin{enumerate}[label=(\roman*)]
    \item $\mathbb U^2\subset \tau_{U,U'}(U\times U')$; 
    \item $\{\tau_{U,U'}^{-1}(\mathbb U_0^2)\}_{U,U'\in\cU}$ is a cover of $X\times X.$ 
\end{enumerate}
 Therefore using a fixed partition of unity we may assume $\supp(\beta) \subset \tau_{U,U'}^{-1}(\mathbb U_0^2)$ for some $U$ and $U'$ as above. We will identify $U\times U'$ with $\tau_{U,U'}(U\times U')$ and use standard complex coordinates $(x,y)$ on $\C^2$. By linearity, it suffices to prove the theorems for the following three cases:
\begin{enumerate}
    \item $\beta = \phi(x,y) \d x\wedge \d \Bar{x}$;
    \item $\beta = \phi(x,y) \d y\wedge \d \Bar{y}$;
    \item $\beta = \phi(x,y) \d x\wedge \d \Bar{y} \quad \text{or} \quad \beta = \phi(x,y) \d y\wedge \d \Bar{x}$
\end{enumerate}
where $\phi$ is a $\mathcal C^5$ function on $\C^2$ supported by $\mathbb U_0^2$ with $\|\phi\|_{\mathcal C^5}\leq 1$.

\subsection*{Separating variables of \texorpdfstring{$\phi$}{phi}} 
Since $\supp(\phi)\subset \mathbb U_0^2$, we can extend $\phi$ to be a periodic function $\widetilde{\phi}$ on $\mathbb R^4$ of period 1 by defining
\[
    \widetilde{\phi}(x+z_1,y+z_2)=\phi(x,y) \quad \text{for any } x,y\in \mathbb U \text{ and } z_1,z_2\in\mathbb Z^2.
\]
Then $\widetilde{\phi}$ is $\mathcal C^5$ and $\|\widetilde{\phi}\|_{\mathcal C^k}=\|\phi\|_{\mathcal C^k}\leq 1$ for any $k\leq 5.$ Let $x=x_1+ \sqrt{-1}x_2$ and $y=y_1+ \sqrt{-1}y_2$. The Fourier expansion of $\widetilde{\phi}$ is given by 
\[
    \widetilde{\phi}(x_1,x_2,y_1,y_2)=\sum_I a_Ie^{2\pi  \sqrt{-1} I\cdot (x,y)}
\]
where $I=(i_1,i_2,i_3,i_4)\in \mathbb Z^4$. Recall that we define $|I|=\max_{1\leq s\leq 4}\{|i_s|\}$. Then as in (\ref{eq:decay-fouri-coeffi}) we have
\begin{equation}\label{eq:fouri-coeffi-bound}
    |a_I|\leq \frac{1}{|I|^5}
\end{equation}
for all $I\neq 0$. Moreover, $|a_I|\leq 1$ for all $I$. For a large integer number $N$ whose value will be specified later, we define the truncation function of $\widetilde{\phi}$ by
\[
    \widetilde{\phi}_N(x_1,x_2,y_1,y_2):=\sum_{|I|\leq N}a_Ie^{2\pi  \sqrt{-1} I\cdot (x,y)}.
\] 
Then we take $k=5$ in (\ref{eq:fouri-coeffi-bound}) and obtain that
\begin{equation}\label{eq:trunc-c0-norm}
    \|\widetilde{\phi}-\widetilde{\phi}_N\|_{\mathcal C^0}\leq \sum_{|I|>N}|a_I|\leq \sum_{|I|>N}\frac{1}{|I|^5}.
\end{equation}
To calculate the last sum, we notice that for any $m\in\mathbb N$, the number of all $I$ such that $|I|\leq m$ is $(2m+1)^4$. The number of all $I$ such that $|I|=m$ is thus given by $(2m+1)^4-(2m-1)^4\leq 80m^3$. Therefore,
\begin{equation}\label{eq:sum-I}
     \sum_{|I|>N}\frac{1}{|I|^5}\leq\sum _{m=N+1}^\infty\frac{1}{m^5}\cdot 80m^3\leq 80\sum_{m=N+1}^\infty \frac{1}{m^2}\leq \frac{80}{N}.
\end{equation}
Let $\chi:\C\to [0,1]$ be a smooth cut-off function on $\mathbb C$ which is supported on ${\mathbb U}$ and equals to 1 in a neighborhood of $\mathbb U_0$ and moreover $\|\chi\|_{\mathcal C^2}\leq 10.$ Denote by $\widetilde{\chi}(x,y)=\chi(x)\chi(y)$ which is a smooth function supported on ${\mathbb U}^2$ and equals to 1 on $\mathbb U_0^2$. Then $\phi=\phi\widetilde{\chi}=\widetilde{\phi}\widetilde{\chi}$.  

\begin{proof}[{End of the proof of \Cref{thm:main-thm-neq}}]

Recall that $d_1<d_2$ and $\Gamma_\infty=\pi^*_1(\mu)$. Let $\delta=d_1d_2^{-1}<1.$ 
\medskip

\textit{Case (1)}: Suppose $\beta=\phi(x,y) \d x\wedge \d \Bar{x}$. Then $\lp \Gamma_\infty, \beta\rp=\lp \mu, (\pi_1)_*\beta\rp=0.$ It remains to prove $\lp d_2^{-n}[\Gamma_n], \beta\rp$ goes to zero exponentially fast. Since $(\pi_1|_{\Gamma_n})_*\beta=\sum_{y\in f^n(x)} \phi(x,y)\d x\wedge \d \Bar{x}$, by definition of $d_1$ we have $\|(\pi_1|_{\Gamma_n})_*\beta\|_{\mathcal C^0}\leq d_1^n$. Therefore using (\ref{eq:graph-covering}), we obtain
\[
    |\lp d_2^{-n}[\Gamma_n], \beta\rp|=\left|d_2^{-n}\int_X(\pi_1|_{\Gamma_n})_*\beta\right|\leq A_1\delta^n
\]
where $A_1>0$ is a constant independent of $n$ and $\phi$.
\medskip

\textit{Case (2)}: Suppose $\beta=\phi(x,y) \d x\wedge \d \Bar{y}$. The proof is similar when $\beta=\phi(x,y) \d y\wedge \d \Bar{x}$. It is still true that $\lp \Gamma_\infty, \beta\rp=0.$ Recall that $\phi=\phi\widetilde{\chi}$, so we can write $\beta=\phi(x,y)\d x\wedge \widetilde{\chi}\d \Bar{y}$. By Cauchy-Schwartz inequality and Case (1), we have
\[
    |\lp d_2^{-n}[\Gamma_n],\beta\rp|^2\leq \lp d_2^{-n}[\Gamma_n], |\phi|^2 \sqrt{-1}\d x\wedge \d \Bar{x}\rp\lp d_2^{-n}[\Gamma_n], \widetilde{\chi}^2 \sqrt{-1} \d y\wedge \d \Bar{y}\rp\leq A_2\delta^n
\]
where $A_2>0$ is a constant independent of $n$ and $\phi$. Here in order to bound the third integral in the previous line, we also use the fact that $\|d_2^{-n}[\Gamma_n]\|\leq 2$ for all $n\geq 1$ which is proven right after (\ref{eq:graph-covering}).
\medskip

\textit{Case (3)}: Consider $\beta = \phi(x,y) \d y\wedge \d \Bar{y}$. Define $T_n=d_2^{-n}[\Gamma_n]-\Gamma_\infty$ as in (\ref{eq:T_n}) and recall that $\phi=\widetilde{\phi}\widetilde{\chi}$. For a fixed $n$ we can divide the term under consideration into two parts:
\begin{equation}\label{eq:trunc-T_n}
    \lp T_n,\phi(x,y)\d y\wedge \d \Bar{y} \rp=\lp T_n, (\widetilde{\phi}-\widetilde{\phi}_N)\widetilde{\chi}\d y\wedge \d \Bar{y} \rp+ \lp T_n, \widetilde{\phi}_N\widetilde{\chi}\d y\wedge \d \Bar{y} \rp.
\end{equation}
Since $\|T_n\|\leq 4$, the first term on the right hand side can be controlled by (\ref{eq:trunc-c0-norm}) and (\ref{eq:sum-I}):
\begin{equation*}
    |\lp T_n, (\widetilde{\phi}-\widetilde{\phi}_N)\widetilde{\chi}\d y\wedge \d \Bar{y} \rp|\leq 4\|\widetilde{\phi}-\widetilde{\phi}_N\|_{\mathcal C^0}\leq \frac{320}{N}.
\end{equation*}
For each $|I|\leq N$, we define 
\begin{align*}
    &\varphi_I(x)=e^{2\pi  \sqrt{-1} (i_1x_1+i_2x_2)}\chi(x),\\
    &\theta_I(y)=e^{2\pi \sqrt{-1}(i_3y_1+i_4y_2)}\chi(y) \d y\wedge \d\Bar{y}.
\end{align*}
Then $\varphi_I$ is a smooth function with $\|\varphi_I\|_{\mathcal C^2}\leq 120N^2$ and also note that $\|\theta_I\|_{\mathcal{C}^0}\leq 1$. Moreover,
\[
    \lp d_2^{-n}[\Gamma_n],\widetilde{\phi}_N\widetilde{\chi}\d y\wedge \d \Bar{y} \rp=\sum_{|I|\leq N}\lp d_2^{-n}[\Gamma_n], a_I\pi_1^*\varphi_I\wedge \pi_2^*\theta_I\rp.
\]
Using (\ref{eq:graph-covering}), we have
\[
    \lp [\Gamma_n], \pi_1^*\varphi_I\wedge \pi_2^*\theta_I\rp=\int_X(\pi_2|_{\Gamma_n})_*(\pi_1^*\varphi_I\wedge \pi_2^*\theta_I)=\lp (f^n)_*\varphi_I,\theta_I\rp.
\]
On the other hand, we also have
\begin{align}\label{eq:gamma-infty}
    \lp \Gamma_\infty, \pi_1^*\varphi_I\wedge \pi_2^*\theta_I\rp=\lp\mu,\varphi_I\rp\int_X\theta_I.
\end{align}
Note that the number of all $I\in\mathbb Z^4$ with $|I|\leq N$ is bounded by $90N^4$. Recall that $|a_I|\leq 1$ for all $I$. By \Cref{prop:equi-function-neq}, we obtain an estimate for the second term of (\ref{eq:trunc-T_n}):

\begin{equation}\label{eq:case3}
    \begin{aligned}
    |\lp T_n,\widetilde{\phi}_N\widetilde{\chi}\d y\wedge \d \Bar{y} \rp|
    &\leq\sum_{|I|\leq N}|\lp T_n, a_I\pi_1^*\varphi_I\wedge \pi_2^*\theta_I\rp|\\
    &=\sum_{|I|\leq N}\left|a_I\int_X\left[d_2^{-n}(f^n)_*\varphi_I-\lp\mu,\varphi_I\rp\right]\theta_I \right|\leq A_3N^6\delta^n
\end{aligned}
\end{equation}

where $A_3>0$ is independent of $n,N$ and $\phi$. Altogether we have
\[
    |\lp T_n,\phi(x,y)\d y\wedge \d \Bar{y} \rp|\leq A_4\left(N^6\delta^n+\frac{1}{N}\right)
\]
where $A_4=\max\{320,A_3\}$. This is true for all $N$. In order to get the final result, we choose $N=[\delta^{-n/12}]$ and deduce that $|\lp T_n,\phi(x,y)\d x\wedge \d \Bar{x} \rp|\leq C\delta^{n/12}$ for some $C>0.$
\end{proof}

\begin{remark}\rm
    We can extend \Cref{thm:main-thm-neq} to the case of meromorphic self-maps on a compact K\"ahler manifold $X$ of dimension $k$. We need to replace the condition $d_1<d_2$ by requiring the topological degree of $f$ (denoted by $d_t$) is strictly larger than other dynamical degrees, see \cite{DNT15} for definition of the dynamical degrees. There exists an equilibrium measure $\mu$ such that $f^*(\mu)=d_t\mu$. Let $\Gamma_n$ be the closure of the graph of $f^n$ in $X\times X$ and define $\Gamma_\infty:=\pi_1^*(\mu)$ where $\pi_1$ is the canonical projection from $X\times X$ to $X.$ Both $\Gamma_n$ and $\Gamma_\infty$ are positive closed $(k,k)$-currents on $X\times X$. It is proven in \cite[Lemma 5.2]{DNT15} that $d_t^{-n}[\Gamma_n]$ converges weakly to $\Gamma_\infty$. We have the following analogue of \Cref{thm:main-thm-neq}. The proof is a combination of our proof of \Cref{thm:main-thm-neq} and \cite[Lemma 5.2]{DNT15} therein.

    \begin{statement}{Theorem 1.1 (bis)}
        Let $f$ be a meromorphic self-map on a compact K\"ahler manifold $X$ of dimension $k$. Suppose the topological degree $d_t$ of $f$ is strictly larger than other dynamical degrees. Let $\mu,\Gamma_n$ and $\Gamma_\infty$ be as above. Then for every $\alpha>0$, there is a constant $0<\lambda_\alpha<1$ such that for any test $(k,k)$-form $\beta$ of class $\mathcal C^\alpha$ on $X\times X$, we have 
    \begin{equation*}
        \big| \langle d_t^{-n}[\Gamma_n]-\Gamma_\infty, \beta  \rangle \big|\leq C_\alpha\|\beta\|_{\mathcal C^\alpha}\lambda_\alpha^n, \quad \text{ for every }n\geq 1,
    \end{equation*}
     where $C_\alpha>0$ is a constant independent of $n$ and $\beta$.
    \end{statement}
\end{remark}

\begin{proof}[{End of the proof of \Cref{thm:main-thm-nwmod}}]
Now $f$ is non-weakly modular with degrees $d_1=d_2=d$ and $\Gamma_\infty=\pi_1^*(\mu^+)+\pi_2^*(\mu^-)$. 
\medskip

\textit{Cases (1) \& (2)}: In this setting, Cases (1) and (2) can be treated in the same way. For simplicity, we prove the case when $\beta=\phi(x,y)\d y\wedge \d \Bar{y}$. We follow the same lines as Case (3) in the proof of \Cref{thm:main-thm-neq} except some minor differences which we point out here. Different from (\ref{eq:gamma-infty}), in the present case we shall have
\[
    \lp \Gamma_\infty, \pi_1^*\varphi_I\wedge \pi_2^*\theta_I\rp=\lp\mu^+,\varphi_I\rp\int_X\theta_I.
\]
As before, let $T_n=d^{-n}[\Gamma_n]-\Gamma_\infty.$ The following is analogous to (\ref{eq:case3}) except that we use \Cref{prop:equi-function-nwmod} instead to get the last inequality:
\[
    |\lp T_n,\widetilde{\phi}_N\widetilde{\chi}\d y\wedge \d \Bar{y} \rp|
   \leq\sum_{|I|\leq N}\left|a_I\int_X\left[d^{-n}(f^n)_*\varphi_I-\lp\mu^+,\varphi_I\rp\right]\theta_I \right|\leq A_5N^6\lambda^n
\]
where $A_5>0$ is independent of $n,N$ and $\phi$. The rest is the same.
\medskip

\textit{Case (3)}: We prove the last case when $\beta=\phi(x,y)\d x\wedge \d\Bar{y}$ and note that the proof is analogous when $\beta=\phi(x,y)\d y\wedge \d\Bar{x}$. In this case, we always have 
\[
    \lp \Gamma_\infty, \beta\rp=\lp \mu^+, (\pi_1)_*\beta \rp+\lp \mu^-, (\pi_2)_*\beta \rp=0.
\]
Therefore it suffices to prove $\lp d^{-n}[\Gamma_n], \beta\rp$ goes to zero exponentially fast. Similar to the proof of \Cref{thm:main-thm-neq}, we divide this term into two parts:
\[
    \lp d^{-n}[\Gamma_n],\phi(x,y)\d x\wedge \d \Bar{y} \rp=\lp d^{-n}[\Gamma_n], (\widetilde{\phi}-\widetilde{\phi}_N)\widetilde{\chi}\d x\wedge \d \Bar{y} \rp+ \lp d^{-n}[\Gamma_n], \widetilde{\phi}_N\widetilde{\chi}\d x\wedge \d \Bar{y} \rp.
\]
Recall that $\|d^{-n}[\Gamma_n]\|\leq 2$. Again we use (\ref{eq:trunc-c0-norm}) and (\ref{eq:sum-I}) to have
\[
    |\lp d^{-n}[\Gamma_n], (\widetilde{\phi}-\widetilde{\phi}_N)\widetilde{\chi}\d x\wedge \d \Bar{y} \rp|\leq 2\|\widetilde{\phi}-\widetilde{\phi}_N\|_{\mathcal C^0}\leq \frac{160}{N}.
\]
To bound the second term, for each $I$ define 
\begin{align*}
    &\gamma_I(x)=e^{2\pi  \sqrt{-1} (i_1x_1+i_2x_2)}\chi(x)\d x;\\
    &\omega_I(y)=e^{2\pi  \sqrt{-1} (i_3y_1+i_4y_2)}\chi(y)\d \Bar{y}.
\end{align*}
Then $\|\gamma_I\|_{\mathcal C^0}\leq 1$ and $\|\omega_I\|_{\mathcal C^0}\leq 1$. By Cauchy-Schwartz inequality and \Cref{normless1}, we have 
\begin{align*}
    |\lp d^{-n}[\Gamma_n],\pi_1^*\gamma_I\wedge \pi_2^*\omega_I \rp|
    &=\left|\int_X d^{-n}(f^n)_*\gamma_I\wedge \omega_I\right|\\
    &\leq \|\omega_I\|_{L^2}\|d^{-n}(f^n)_*(\gamma_I)\|_{L^2}\leq A_6\lambda^n
\end{align*}
where $A_6$ is independent of $n,\gamma_I$ and $\omega_I$. As in (\ref{eq:case3}), we deduce that
\[
    |\lp d^{-n}[\Gamma_n], \widetilde{\phi}_N\widetilde{\chi}\d x\wedge \d \Bar{y} \rp|\leq \sum_{|I|\leq N}|a_I|\lambda^n\leq 90A_6N^4\lambda^n.
\]
Therefore,
\[
    |\lp d^{-n}[\Gamma_n],\phi(x,y)\d x\wedge \d \Bar{y} \rp|\leq A_7\left(N^4\lambda^n+\frac{1}{N}\right).
\]
where $A_7=\max\{160,90A_6\}$. Choose $N=[\lambda^{-n/8}]$ and deduce that $|\lp d^{-n}[\Gamma_n],\phi(x,y)\d x\wedge \d \Bar{y} \rp|\leq C\lambda^{n/8}$ for some $C>0.$ 
\end{proof}

{
  \preparebibliography
  \printbibliography

@article{B,
 author = {Bassanelli, Giovanni},
 title = {A cut-off theorem for plurisubharmonic currents},
 fjournal = {Forum Mathematicum},
 journal = {Forum Math.},
 volume = {6},
 number = {5},
 pages = {567--595},
 year = {1994},
}

@incollection{CO01,
 author = {Clozel, Laurent and Otal, Jean-Pierre},
 title = {Unique ergodicity of modular correspondences},
 booktitle = {Essays on geometry and related topics. M\'emoires d\'edi\'es \`a Andr\'e Haefliger. Vol. 1},
 pages = {205--216},
 year = {2001},
 publisher = {Gen{\`e}ve: L'Enseignement Math{\'e}matique},
}

@article{clozel-ullmo,
 author = {Clozel, Laurent and Ullmo, Emmanuel},
 title = {Modular correspondences and invariant measures},
 fjournal = {Journal f{\"u}r die Reine und Angewandte Mathematik},
 journal = {J. Reine Angew. Math.},
 volume = {558},
 pages = {47--83},
 year = {2003},
}

@book{D12Book,
	author               = {Jean-Pierre Demailly},
	publisher            = {\url{http://www-fourier.ujf-grenoble.fr/~demailly/manuscripts/agbook.pdf}},
	title                = {Complex {A}nalytic and {D}ifferential {G}eometry},
}

@article{D05BSMF,
 author = {Dinh, Tien-Cuong},
 title = {Distribution des pr{\'e}images et des points p{\'e}riodiques d'une correspondance polynomiale},
 fjournal = {Bulletin de la Soci{\'e}t{\'e} Math{\'e}matique de France},
 journal = {Bull. Soc. Math. Fr.},
 volume = {133},
 number = {3},
 pages = {363--394},
 year = {2005},
}

@article{dinh:modular,
 author = {Dinh, Tien-Cuong},
 title = {Equidistribution of periodic points for modular correspondences},
 fjournal = {The Journal of Geometric Analysis},
 journal = {J. Geom. Anal.},
 volume = {23},
 number = {3},
 pages = {1189--1195},
 year = {2013},
}

@article{DKW20,
 author = {Dinh, Tien-Cuong and Kaufmann, Lucas and Wu, Hao},
 title = {Dynamics of holomorphic correspondences on {Riemann} surfaces},
 fjournal = {International Journal of Mathematics},
 journal = {Int. J. Math.},
 volume = {31},
 number = {5},
 pages = {2050036},
 year = {2020},
}

@article{DKW21,
 author = {Dinh, Tien-Cuong and Kaufmann, Lucas and Wu, Hao},
 title = {Products of random matrices: a dynamical point of view},
 fjournal = {Pure and Applied Mathematics Quarterly},
 journal = {Pure Appl. Math. Q.},
 volume = {17},
 number = {3},
 pages = {933--969},
 year = {2021},
}

@article{DKW23,
 author = {Dinh, Tien-Cuong and Kaufmann, Lucas and Wu, Hao},
 title = {Random walks on {{\(\mathrm{SL}_2({\mathbb{C}})\)}}: spectral gap and limit theorems},
 fjournal = {Probability Theory and Related Fields},
 journal = {Probab. Theory Relat. Fields},
 volume = {186},
 number = {3-4},
 pages = {877--955},
 year = {2023},
}

@article{DNT15,
 author = {Tien-Cuong Dinh and Vi{\^e}t Anh Nguy{\^e}n and Tuyen Trung Truong},
 title = {Equidistribution for meromorphic maps with dominant topological degree},
 fjournal = {Indiana University Mathematics Journal},
 journal = {Indiana Univ. Math. J.},
 volume = {64},
 number = {6},
 pages = {1805--1828},
 year = {2015},
}

@article {DS06CMH,
    AUTHOR = {Dinh, Tien-Cuong and Sibony, Nessim},
     TITLE = {Distribution des valeurs de transformations m\'eromorphes et
              applications},
   JOURNAL = {Comment. Math. Helv.},
  FJOURNAL = {Commentarii Mathematici Helvetici. A Journal of the Swiss
              Mathematical Society},
    VOLUME = {81},
      YEAR = {2006},
    NUMBER = {1},
     PAGES = {221--258},
}

@article{DS17,
 author = {Dinh, Tien-Cuong and Sibony, Nessim},
 title = {Equidistribution problems in complex dynamics of higher dimension},
 fjournal = {International Journal of Mathematics},
 journal = {Int. J. Math.},
 volume = {28},
 number = {7},
 pages = {31},
 note = {Id/No 1750057},
 year = {2017},
}

@book{fornaess:survey,
 author = {Forn{\ae}ss, John Erik},
 title = {Dynamics in several complex variables. {Expository} lectures from the {NSF}-{CBMS} regional conference held at {SUNY} at {Albany}, {Albany}, {NY}, {June} 12-17, 1994},
 fseries = {Regional Conference Series in Mathematics},
 series = {Reg. Conf. Ser. Math.},
 volume = {87},
 year = {1996},
 publisher = {Providence, RI: AMS, American Mathematical Society, National Science Foundation},
}

@article{Matus24,
 author = {Matus De La Parra, V.},
 title = {Equidistribution for matings of quadratic maps with the modular group},
 fjournal = {Ergodic Theory and Dynamical Systems},
 journal = {Ergodic Theory Dyn. Syst.},
 volume = {44},
 number = {3},
 pages = {859--887},
 year = {2024},
}

@incollection{mok:correspondences,
 author = {Mok, Ngaiming},
 title = {Local holomorphic isometric embeddings arising from correspondences in the rank-1 case.},
 booktitle = {Comtemporary trends in algebraic geometry and algebraic topology. Proceedings of the Wei-Liang Chow and Kuo-Tsai Chen memorial conference on algebraic geometry and algebraic topology, Tianjin, China, October 9--13, 2000},
 pages = {155--165},
 year = {2002},
 publisher = {River Edge, NJ: World Scientific},
}

@incollection {S99PS,
    AUTHOR = {Sibony, Nessim},
     TITLE = {Dynamique des applications rationnelles de {$\mathbb P^k$}},
 BOOKTITLE = {Dynamique et g\'eom\'etrie complexes ({L}yon, 1997)},
    SERIES = {Panor. Synth\`eses},
    VOLUME = {8},
     PAGES = {ix--x, xi--xii, 97--185},
 PUBLISHER = {Soc. Math. France, Paris},
      YEAR = {1999},
}

@book{SS,
 author = {Stein, Elias M. and Shakarchi, Rami},
 title = {Fourier analysis. {An} {Introduction}},
 fseries = {Princeton Lectures in Analysis},
 series = {Princeton Lect. Anal.},
 volume = {1},
 year = {2003},
}

@book {T78Book,
    AUTHOR = {Triebel, H.},
     TITLE = {Interpolation {T}heory, {F}unction {S}paces, {D}ifferential {O}perators},
 PUBLISHER = {VEB Deutscher Verlag der Wissenschaften, Berlin},
      YEAR = {1978},
}
}

\end{document}